\documentclass[reqno]{amsart}
\usepackage[left=2.5cm,right=2.5cm,top=2cm,bottom=2cm,includeheadfoot]{geometry}
\usepackage{amsmath, amsthm, amssymb, amstext}

\usepackage{hyperref,xcolor}
\hypersetup{
	pdfborder={0 0 0},
	colorlinks,}
\usepackage{enumitem}
\allowdisplaybreaks
\newtheorem{theorem}{Theorem}

\newtheorem{lemma}[theorem]{Lemma}

\newcommand*\diff{\mathrm{d}}

\newcommand{\norm}[1]{\left \Vert #1\right\Vert}

\newcommand{\Lp}[1]{L^{#1}(\Omega)}

\newcommand{\into}{\int_{\Omega}}

\newcommand{\close}{\overline{\Omega}}

\numberwithin{theorem}{section}
\numberwithin{equation}{section}

\newcommand{\R}{{\mathbb R}}

\newcommand{\Assg}[1]{\textup{(g)}}

\title [On Kirchhoff-type $p(\cdot)$-Laplacian problems with  sandwich-type and arbitrary growth]
{On Kirchhoff-type $p(\cdot)$-Laplacian problems with  sandwich-type and arbitrary growth}

	\author[K. Ho]{Ky Ho}
\address{Ky Ho\newline
	Department of Mathematics and Statistics, University of Economics Ho Chi Minh City, 59C, Nguyen Dinh Chieu St., Ho Chi Minh City, Vietnam}
\email{kyhn@ueh.edu.vn}

\subjclass[2020]{35B33, 35B45, 35J20, 35J62, 46E35}
\keywords{$p(\cdot)$-Laplacian,  Kirchhoff term, sandwich-type growth, arbitrary growth, variational methods.}

\begin{document}

\begin{abstract}
	We establish the existence of a positive bounded weak solution for a class of Kirchhoff-type $p(\cdot)$-Laplacian problems involving an arbitrary growth and a sandwich-type growth $s(\cdot)\in (\inf p,\sup p)$. This setting leads to substantial analytical difficulties in the variational analysis of the associated energy functional. By combining truncation arguments with a priori estimates, we prove the existence result under suitable assumptions on the data. 
\end{abstract}

\maketitle

\section{Introduction and Main result}

In this paper, we investigate the existence of a positive bounded weak solution to the following Kirchhoff-type $p(\cdot)$-Laplacian problem
	\begin{equation}\label{Eq}
	\begin{cases}
		-M\left(\int_\Omega\frac{1}{p(x)}|\nabla u|^{p(x)}\diff x\right)\operatorname{div}\left(|\nabla u|^{p(x)-2}\nabla u\right)=\lambda \omega(x)|u|^{s(x)-2}u+\theta b(x)|u|^{\gamma(x)-2}u& \quad \text{in } \Omega,\\
		u=0&\quad \text{on } \partial \Omega, 
	\end{cases}
\end{equation}
where $\Omega $ is a bounded domain in $\mathbb{R}^{N}$ ($N\geq 2$) with a Lipschitz boundary $\partial \Omega$, and $\lambda,\theta$ are real parameters.

In the following, for $h \in C(\close)$ we denote $h^{-}:=\inf_{x\in \close} h(x)$, $h^{+}:=\sup_{x\in \close} h(x)$, and  $h^*(x):=\frac{Nh(x)}{N-h(x)}$ if $h(x)<N$ and $h^*(x):=+\infty$ if $h(x)\geq N$ for $x\in\close$. We suppose the subsequent hypothesis: 

\vskip5pt
\begin{enumerate}[label=\textnormal{(H)},ref=\textnormal{H}]
	\item\label{H}
	$p,s,\gamma\in C(\overline{\Omega})$, $M\in C([0,\infty),\R)$, and $\omega,b\in L^\infty(\Omega)$ such that the following conditions are satisfied:
	\begin{enumerate}[label=\textnormal{(\roman*)},ref=\textnormal{\roman*}]
		\item $1<p^-<s^-\leq s^+<p^+$ and $s(x)<\min \left\{\gamma(x), p^*(x)\right\}$ for all $x\in\overline{\Omega}$;
		\item  for any $t_*>0$, there exists $T_*>t_*$ such that $0<\inf_{t\geq 0}M(t)\leq \max_{t\in [0,T_*]}M(t)=M(T_*)$;
		\item  there exists a nonempty open ball $B\subset \Omega$ such that $s(x)<p(x)$ for all $x\in B$ and $\omega(x)>0$ for a.a. $x\in B$.		
	\end{enumerate}
\end{enumerate}
Note that assumption \eqref{H}(ii) is automatically satisfied if $M \in C([0,\infty),\mathbb{R})$ is nondecreasing and  satisfies $M(0)>0$. More generally, it holds for any function $M \in C([0,\infty),\mathbb{R})$ for which there exists some $T_0>0$ such that
$
0< \min_{t\in[0,T_0]} M(t) \le \max_{t\in[0,T_0]} M(t)=M(T_0),
$ and $M$ is nondecreasing on $[T_0,\infty)$.

\par
Problems driven by the $(p,q)$-Laplacian ($1<p<q$) with reaction terms of $p$-sublinear  or $q$-superlinear growth have been extensively studied. However, relatively few results are available for the intermediate regime between $p$ and $q$, that is, a sandwich-type growth, since this setting introduces significant difficulties in the analysis of the associated energy functional. In this direction, we mention \cite{BBF.2021,HS.AML21} for critical $(p,q)$-Laplacian problems, \cite{CF,FFHW} for critical double phase problems, and \cite{BFG} for critical $(p,q)$-fractional problems. For the case of $p(\cdot)$-Laplacian, which is a borderline case of the $(p,q)$-Laplacian, we refer to \cite{MR07} for subscritical problems and \cite{HS.ANA23} for critical problems. Although \cite{HS.ANA23} addresses problems governed by an operator that generalizes the $(p,q)$-Laplacian and the $p(\cdot)$-Laplacian, its result does not cover the regime $p^-<s(\cdot)<p^+$. 

\par
In this paper, we complement the results in this direction by studying problem \eqref{Eq} under hypothesis \eqref{H}. By a bounded weak solution to problem \eqref{Eq} we mean a function $u\in W_0^{1,p(\cdot)}(\Omega)\cap L^\infty(\Omega)$ such that
\begin{multline*}
M\left(\int_\Omega\frac{1}{p(x)}|\nabla u|^{p(x)}\diff x\right)\int_\Omega |\nabla u|^{p(x)-2}\nabla u\cdot\nabla v\,\diff x\\
=\int_\Omega \left[\lambda \omega(x)|u|^{s(x)-2}u+\theta b(x)|u|^{\gamma(x)-2}u\right]v\,\diff x,\quad \forall v\in W_0^{1,p(\cdot)}(\Omega).
\end{multline*}
Our main result is stated as follows.
\begin{theorem}\label{Theo.Sandwich} Assume that \eqref{H} holds, and let $\bar{C}_s$ be defined by \eqref{Cs}. Then, for each $\lambda\in (0,\bar{C}_s),$ there exists $\theta_\star>0$ such that, for any $\theta\in (-\theta_\star,\theta_\star)$, problem~\eqref{Eq} admits a nontrivial nonnegative bounded weak solution with negative energy. Moreover, if we additionally assume that $p\in C^{1}({\close})$ and that $\omega$ and $b$ are nonnegative a.e. in $\Omega$, then for any $\theta\in [0,\theta_\star)$, problem \eqref{Eq} admits a positive bounded weak solution of class $C^{1}(\Omega)$ with negative energy. 
\end{theorem}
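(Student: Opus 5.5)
We will prove the theorem by a truncation argument combined with a priori $L^\infty$ bounds. The constant $\bar C_s$ in \eqref{Cs} is presumably an embedding-type constant controlling $\int_\Omega \omega|u|^{s(x)}$ by $\int_\Omega \frac1{p(x)}|\nabla u|^{p(x)}$ on a bounded region of $W_0^{1,p(\cdot)}(\Omega)$; we take it as given.

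\textbf{Step 1: truncate the data.} Fix $\lambda\in(0,\bar C_s)$. Set $m_0:=\inf_{t\ge0}M(t)>0$. Given a level $t_*>0$, let $T_*>t_*$ be as in \eqref{H}(ii), and define the truncated Kirchhoff function $\widetilde M(t)=M(\min\{t,T_*\})$. This function is bounded between $m_0$ and $M(T_*)$. Fix also a level $K>0$ and cut the $\gamma$-term at height $K$:
\[
f_K(x,t)=b(x)|T_K t|^{\gamma(x)-2}T_K t, \qquad T_K t=\max\{-K,\min\{t,K\}\}.
\]
This removes the arbitrary growth of $\gamma$. Finally, restrict to nonnegative functions by replacing $u$ with $u^+$ in the reaction.

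The truncated energy $\widetilde J(u)=\widehat M(\int \frac1p|\nabla u|^p)-\lambda\int\frac{\omega}{s}(u^+)^s-\theta\int F_K(x,u)$, with $\widehat M(t)=\int_0^t\widetilde M$, is $C^1$ and weakly lower semicontinuous. However, it need not be coercive, because $s^+<p^+$ does not dominate where $p$ is small. This is the sandwich difficulty. We therefore work on a ball instead: minimize $\widetilde J$ over $\{u:\int\frac1p|\nabla u|^p\le t_*\}$. Using $\lambda<\bar C_s$, one shows $\widetilde J>\inf$ on the boundary sphere when $|\theta|$ is small. The key estimate is
\[
\Big|\theta\int F_K\Big|\le|\theta|\,\|b\|_\infty K^{\gamma^+}|\Omega|,
\]
so this term is made small by choosing $\theta_\star$ depending on $K$.

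\textbf{Step 2: the minimum is negative and interior.} Use \eqref{H}(iii): take $\varphi\in C_c^\infty(B)$, $\varphi\ge0$, $\varphi\ne0$. Since $s<p$ on $B$, we get $\widetilde J(\tau\varphi)<0$ for small $\tau>0$. Hence the minimizer $u$ (which exists by weak compactness of the ball and compactness of the embeddings, since $s<p^*$) has negative energy and is nontrivial. Since it lies in the interior, it is a critical point. Testing with $u^-$ gives $u\ge0$.

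\textbf{Step 3: remove the truncations (main obstacle).} This step needs bounds that are uniform in $\theta$. By a De Giorgi/Moser iteration for $p(\cdot)$-Laplacian equations with coefficient $\widetilde M\ge m_0$, we get $\|u\|_\infty\le C$. Here $C$ depends only on $t_*$, $m_0$, $\lambda$, and a bound on $|\theta|K^{\gamma^+}$. The truncated reaction satisfies $|g|\le C(1+|u|^{s(x)-1})$ with $s<p^*$, so the iteration applies. The danger is circularity between $K$ and $\theta_\star$. We resolve it in this order:
\begin{enumerate}[label=(\alph*)]
\item choose $t_*$ first;
\item require $|\theta|K^{\gamma^+}\le1$, so that the iteration constant $C_0$ is independent of $K$;
\item take $K=C_0$, so that $T_K u=u$;
\item set $\theta_\star=K^{-\gamma^+}$, also small enough for Step 1.
\end{enumerate}
Since $\int\frac1p|\nabla u|^p<t_*<T_*$, we also have $\widetilde M=M$ along $u$. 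Thus $u$ is a bounded weak solution of \eqref{Eq} with negative energy.

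\textbf{Step 4: regularity and positivity.} Assume $p\in C^1(\close)$ and $\omega,b\ge0$, with $\theta\ge0$. The solution is bounded, so Fan's $C^{1,\alpha}$ regularity applies to the equation $-\operatorname{div}(|\nabla u|^{p-2}\nabla u)=g/M(\cdot)$ with bounded right-hand side; this gives $u\in C^1(\Omega)$. Because the right-hand side is nonnegative and $u\ne0$, the strong maximum principle for the $p(\cdot)$-Laplacian (Zhang) yields $u>0$ in $\Omega$.
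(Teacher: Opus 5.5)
Your route works, but two points in Step~1 need repair before the ordering in Step~3 goes through.

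\emph{The $F_K$ bound.} The estimate $|\theta\int_\Omega F_K|\le|\theta|\,\|b\|_\infty K^{\gamma^+}|\Omega|$ is false. Since $f_K(x,t)=b(x)K^{\gamma(x)-1}$ for $t>K$, its primitive $F_K$ grows linearly in $t$. What is true, for $K\ge1$, is $|F_K(x,t)|\le\|b\|_\infty K^{\gamma^+-1}|t|$. Because $\int_\Omega|u|\,\diff x$ is bounded on your constraint set, the $\theta$-term is still $O(|\theta|)$ there, with a constant depending on $K$. As $K$ is fixed before $\theta_\star$ in your Step~3, this is harmless.

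\emph{The choice of $t_*$.} The level $t_*$ is not free, and the sphere estimate you leave as ``one shows'' is exactly where $\lambda<\bar C_s$ enters. With \eqref{Cs}, take $t_*=1/p^+$. On $\{\int_\Omega\frac1{p(x)}|\nabla u|^{p(x)}\diff x=t_*\}$ one has $\int_\Omega|\nabla u|^{p(x)}\diff x\le1$, so $\|u\|\le1$ by \eqref{norm-modular}. Hence $\int_\Omega|u|^{s(x)}\diff x\le\max\{C_s^{s^+},C_s^{s^-}\}$. Using $\frac{w_0}{s^-}\max\{C_s^{s^+},C_s^{s^-}\}=\frac{m_0}{p^+}\bar C_s^{-1}$ and $\widehat M(t_*)\ge m_0t_*$, you get
\[
\widetilde J(u)\ \ge\ \frac{m_0}{p^+}\Bigl(1-\frac{\lambda}{\bar C_s}\Bigr)-|\theta|\,\|b\|_\infty K^{\gamma^+-1}\int_\Omega|u|\,\diff x ,
\]
which is positive for $|\theta|$ small depending on $K$.

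With these repairs your proof is complete, and it differs from the paper's in its mechanics.
\begin{enumerate}[label=(\roman*)]
\item The paper works on the norm ball $B_\delta$ with the explicit radius $\delta<1$ from \eqref{del}. It obtains a Palais--Smale sequence via Ekeland's principle and needs Lemma~\ref{le.PS} (the $(S_+)$ property) to pass to a limit. You minimize directly over a modular ball, where weak compactness, weak lower semicontinuity and the compact embedding suffice.
\item Your constraint also keeps the modular below $T_*$ automatically, so the Kirchhoff truncation is inactive without the separate estimate \eqref{E-t01}--\eqref{E-t02}.
\item The truncations differ. Beyond $T$ the paper replaces $t^{\gamma-1}$ by $T^{\gamma-s}t^{s-1}$, so the $\theta$-term keeps $s$-growth and is absorbed into the same constant as the $\lambda$-term. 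The single condition $\lambda+|\theta|T^{(\gamma-s)^+}<\bar C_s$ then gives both the geometry of Lemma~\ref{le.geo} and the $\theta$-uniform growth bound \eqref{G1} needed for Lemma~\ref{le.b}, with the explicit $\theta_\star$ of \eqref{theta}. Your height-$K$ cut needs two separate smallness conditions on $\theta$.
\end{enumerate}
The device that breaks the circularity is the same in both: an $L^\infty$ bound uniform in $\theta$ under a growth condition, then the truncation level, then $\theta_\star$. Your Steps~2 and~4 coincide with Lemma~\ref{le2} and the paper's regularity and maximum-principle argument.
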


\par
To the best of our knowledge, all existing works in this direction have considered only nonlinearities with growth not exceeding the critical level, and Theorem \ref{Theo.Sandwich} is even new in the case $M(\cdot)\equiv 1$ and $\gamma=p^*$. Moreover, \cite[Theorem 2.1]{MR07} follows as a particular case of Theorem~\ref{Theo.Sandwich} by taking $M(\cdot)\equiv 1$, $\omega(\cdot)\equiv 1$, and $\theta=0$. We also note that, even in this setting, we obtain an explicit range of admissible values of $\lambda$, whereas in \cite{MR07} the corresponding result is established only for sufficiently small values of $\lambda$. To overcome the difficulties arising from the arbitrary growth, we combine truncation arguments with a priori estimates.

\par
The paper is organized as follows. In Section~\ref{Pre}, we briefly review the definition and basic properties of generalized Lebesgue-Sobolev spaces with variable exponents.  Section~\ref{sandwich} is devoted to the existence of  positive bounded weak solutions to problem \eqref{Eq}.


\section{Preliminaries}\label{Pre}

In this section, we briefly review the definition and basic properties of generalized Lebesgue-Sobolev spaces with variable exponents.

Let $\Omega$ be a bounded domain in $\mathbb{R}^N$ with Lipschitz boundary $\partial\Omega$, and let $M(\Omega)$ denote the set of all measurable functions $u\colon \Omega\to\R$.  Let $p\in C(\close)$ with $p^->1$, then the generalized Lebesgue space $L^{p(\cdot)}(\Omega)$ is defined by
\begin{align*}
	L^{p(\cdot)}(\Omega):=\left \{u \in M(\Omega) \,:\,\rho(u):=\into |u|^{p(x)}\,\diff x < +\infty \right \},
\end{align*}
endowed with the Luxemburg norm
\begin{align*}
	\|u\|_{p(\cdot)}:= \inf \left \{ \tau >0 : \rho\left(\frac{u}{\tau}\right) \leq 1  \right \}.
\end{align*}
The generalized Sobolev space $W^{1,p(\cdot)}(\Omega)$ is defined by
\begin{align*}
	W^{1,p(\cdot)}(\Omega)
	:=\left \{u \in L^{p(\cdot)}(\Omega) \,:\,|\nabla u| \in L^{p(\cdot)}(\Omega) \right \},
\end{align*}
endowed with the norm
\begin{align*}
	\|u\|_{1,p(\cdot)} := \|u\|_{p(\cdot)}+\|\nabla u\|_{p(\cdot)},
\end{align*}
where $\|\nabla u\|_{p(\cdot)}=\| \, |\nabla u| \,\|_{p(\cdot)}$. The space $W_0^{1,p(\cdot)}(\Omega)$ is defined as the completion of $C^\infty_0(\Omega)$ in $W^{1,p(\cdot)}(\Omega)$. 

 In view of \cite{Diening,FZ}, it holds that $L^{p(\cdot)}(\Omega)$, $W^{1,p(\cdot)}(\Omega)$ and $W_0^{1,p(\cdot)}(\Omega)$ are reflexive Banach spaces, and we can equip $W_0^{1,p(\cdot)}(\Omega)$ with the equivalent norm
\begin{align*}
	\|\cdot\|:=\|\nabla \cdot \|_{p(\cdot)}.
\end{align*} 
Moreover, it holds that
\begin{equation}\label{norm-modular}
	\min\left\{\norm{u}_{p(\cdot)}^{p^-},\norm{u}_{p(\cdot)}^{p^+}\right\}\leq \rho(u)\leq \max\left\{\norm{u}_{p(\cdot)}^{p^-},\norm{u}_{p(\cdot)}^{p^+}\right\},\quad \forall u\in \Lp{p(\cdot)},
\end{equation}
and we have the following compact embedding:
\begin{align}\label{Compact.Imb}
	W_0^{1,p(\cdot)}(\Omega) \hookrightarrow \hookrightarrow\Lp{r(\cdot)},
\end{align}
for any $r \in C(\close)$ with $ 1 \leq r(x) < p^*(x)$ for all $x\in \close$.


\section{Proof of the main result}\label{sandwich}
Throughout this section, we assume that hypothesis \eqref{H} holds. Denote $$X:=\left(W_0^{1,p(\cdot)}(\Omega),\|\cdot\|\right).$$
We also denote by $X^*$ the dual space of $X$ and by $\langle \cdot,\cdot\rangle$ the duality pairing between $X^*$ and $X$.

\vskip5pt
In view of \eqref{H} and \eqref{Compact.Imb} we have
\begin{equation}\label{BC}
	C_s:=\sup_{u\in X, \|u\|=1}\|u\|_{s(\cdot)}\in (0,\infty).
\end{equation}
Thus, we can introduce the following positive constant involving data:
\begin{equation}\label{Cs}
	\bar{C}_s:=\frac{m_0s^-}{p^+}w_0^{-1}\min \left\{C_s^{-s^+},C_s^{-s^-}\right\},
\end{equation}
where $$m_0:=\inf_{t\geq 0}M(t)\quad \text{and}\quad w_0:=\|\omega\|_\infty+\|b\|_\infty.$$

Let $\lambda,\theta\in\R$, and let $t_0>0$ be fixed and specified later such that $M(t_0)=\max_{t\in [0,t_0]}M(t)$. We define modified Kirchhoff functions $M_0, \widehat{M}_0\colon [0,\infty) \to \R$ by
\begin{equation*}
	M_0(t):=
	\begin{cases}
		M(t) &\text{if }0\leq t\leq t_0,\\
		M(t_0) &\text{if }t>t_0,
	\end{cases}
	\quad\text{and}\quad
	\widehat{M}_0(t):=\int_0^t M_0(\tau)\,\diff \tau.
\end{equation*}
It is clear that $M_0\in C([0,\infty),\R)$ and
\begin{align}\label{Est.M0}
	m_0\leq M_0(t)\leq M(t_0) \ \ \text{and}\ \ 
	m_0t\leq \widehat{M}_0(t)\leq M(t_0)t\quad \text{for all }t\in[0,\infty).
\end{align}
Next, we modify the reaction term as follows. Let $T>1$ be fixed and specified later and define  
$g:\Omega \times \R \to \R$ by
\begin{equation*}
	g(x,t):=	\begin{cases}
		0 \ & \text{ if }  \  (x,t)\in \Omega\times (-\infty,0),\\
		\lambda\omega(x)t^{s(x)-1} +  \theta b(x)t^{\gamma(x)-1} \  & \text{ if }  \  (x,t)\in \Omega\times [0, T], \\
		\lambda\omega(x)t^{s(x)-1} +  \theta b(x) T^{\gamma(x)-s(x)}  t^{s(x)-1}  \  &\text{ if }  \   (x,t)\in \Omega\times (T,\infty).
	\end{cases} 
\end{equation*}
Obviously, the modified function $g$ satisfies the following condition.
\begin{enumerate}[label=\textnormal{(G)},ref=G]
	\item \label{g} $g:\Omega\times\R\to\R$ is a  Carath\'edory function satisfying 
	$$|g(x,t)|  \leq w_0\left(|\lambda| + |\theta| T^{(\gamma-s)^+}\right)  |t|^{s(x)-1}\quad \text{for  a.a. } x \in \Omega\ \ \text{and all}\ \ t \in \R.$$
	\end{enumerate}
	Define
	$$\displaystyle G(x,t):= \int_0^t g(x,\tau) \diff \tau\quad \text{for  a.a. } x \in \Omega\ \ \text{and all}\ \ t \in \R.$$
	It is clear that 
\begin{equation}\label{G}
	G(x,t)=	\begin{cases}
		0 & \text{ if }  \  (x,t)\in \Omega\times (-\infty,0),\\
		\frac{\lambda\omega(x)}{s(x)}t^{s(x)} +  \frac{\theta b(x)}{\gamma(x)}t^{\gamma(x)}  & \text{ if }  \  (x,t)\in \Omega\times [0, T], \\
		\frac{\lambda\omega(x)}{s(x)}t^{s(x)} +  \frac{\theta b(x) T^{\gamma(x)-s(x)}}{s(x)}  t^{s(x)}-\frac{(\gamma(x)-s(x))\theta b(x)}{s(x)\gamma(x)}T^{\gamma(x)}  &\text{ if }  \   (x,t)\in \Omega\times (T,\infty).
	\end{cases} 
\end{equation}
Consequently, it holds that
\begin{equation}\label{G.bound}
	|G(x,t)| \leq \frac{w_0\left(|\lambda|+2|\theta| T^{(\gamma-s)^+}\right)}{s^-}|t|^{s(x)}\quad \text{for  a.a. } x \in \Omega\ \ \text{and all}\ \ t \in \R.
\end{equation}
We consider the following modified problem
\begin{equation}\label{Eq-m} 
	\begin{cases}
		-M_0\left(\int_\Omega\frac{1}{p(x)}|\nabla u|^{p(x)}\diff x\right)\operatorname{div}\left(|\nabla u|^{p(x)-2}\nabla u\right)=g(x,u)& \quad \text{in } \Omega,\\
		u=0&\quad \text{on } \partial \Omega. 
	\end{cases}
\end{equation}
By a weak solution to problem \eqref{Eq-m} we mean a function $u\in X$ satisfying
\begin{equation*}
	M_0\left(\int_\Omega\frac{1}{p(x)}|\nabla u|^{p(x)}\diff x\right)\int_\Omega |\nabla u|^{p(x)-2}\nabla u\cdot\nabla v\,\diff x
	=\int_\Omega g(x,u)v\,\diff x,\quad \forall v\in X.
\end{equation*}
The energy functional associated with problem \eqref{Eq-m} is defined by
\begin{equation*}\label{J}
	\begin{gathered}
		J:\, X\to\mathbb{R}, \\
		J(u):=\widehat{M}_0\left(\int_\Omega\frac{1}{p(x)}|\nabla u|^{p(x)}\diff x\right)-\int_\Omega G(x,u)\,\diff x,\quad u\in X.
	\end{gathered}
\end{equation*}
By a standard argument, using the definitions of $\widehat{M}_0$ and $G$ and the embedding \eqref{Compact.Imb}, we can easily show that $J\in C^1(X,\R)$ and its Fr\'echet derivative $J'\colon X\to X^*$ is given by
\begin{align}\label{J'}
	\left\langle J' (u),v\right\rangle= M_0\left(\int_\Omega\frac{1}{p(x)}|\nabla u|^{p(x)}\diff x\right)\int_\Omega |\nabla u|^{p(x)-2}\nabla u\cdot\nabla v\,\diff x-\int_\Omega g(x,u)v\,\diff x,\quad \forall u,v\in X.
\end{align}
Clearly, any critical point $u$ of $J$ satisfying $\|u\|_\infty<T$ and $\int_\Omega\frac{1}{p(x)}|\nabla u|^{p(x)}\diff x<t_0$ is a nonnegative bounded weak solution to the original problem \eqref{Eq}. Here, we note that from the definition of $g$ and the equality $\left\langle J' (u),u_-\right\rangle=0$, where $u_-(\cdot):=-\min\{u(\cdot),0\}$, it is easy to see that any critical point $u$ of $J$ is nonnegative a.e. in $\Omega$.
\begin{lemma}\label{le.PS}  Let $\lambda,\theta\in\mathbb{R}$, and let $\{u_n\}_{n=1}^\infty$ be a  bounded sequence in $X$ satisfying $J'(u_n)\to 0$ in $X^*$ as $n\to\infty$. Then, $\{u_n\}_{n=1}^\infty$ admits a convergent subsequence.
\end{lemma}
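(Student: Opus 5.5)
Since $X$ is reflexive and $\{u_n\}$ is bounded, we pass to a subsequence with $u_n\rightharpoonup u$ weakly in $X$. By the compact embedding \eqref{Compact.Imb} with $r=s$ (allowed because $s(x)<p^*(x)$ by \eqref{H}(i)), we also get $u_n\to u$ in $L^{s(\cdot)}(\Omega)$. Passing to a further subsequence, we may assume
\[
t_n:=\int_\Omega\frac{1}{p(x)}|\nabla u_n|^{p(x)}\diff x\to t_\infty\ge 0 .
\]
This is possible because $\{t_n\}$ is bounded, by \eqref{norm-modular} and the boundedness of $\{u_n\}$.

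Next we test $J'(u_n)$ with $u_n-u$. Since $J'(u_n)\to0$ in $X^*$ and $\{u_n-u\}$ is bounded in $X$, we have $\langle J'(u_n),u_n-u\rangle\to 0$. For the reaction term, (G) gives
\[
|g(x,u_n)|\le C|u_n|^{s(x)-1},
\]
so $\{g(\cdot,u_n)\}$ is bounded in $L^{s'(\cdot)}(\Omega)$. This uses $\int |u_n|^{(s-1)s'}=\int|u_n|^{s}$ bounded together with \eqref{norm-modular} applied to $s'$. The Hölder inequality in variable exponent spaces then gives
\[
\left|\int_\Omega g(x,u_n)(u_n-u)\,\diff x\right|\le 2\|g(\cdot,u_n)\|_{s'(\cdot)}\|u_n-u\|_{s(\cdot)}\to0 .
\]
Combining this with \eqref{J'}, we obtain
\[
M_0(t_n)\int_\Omega|\nabla u_n|^{p(x)-2}\nabla u_n\cdot\nabla(u_n-u)\,\diff x\to 0 .
\]
By \eqref{Est.M0}, $M_0(t_n)\ge m_0>0$, where $m_0>0$ by \eqref{H}(ii). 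Dividing by $M_0(t_n)$ yields
\[
\limsup_{n\to\infty}\langle L(u_n),u_n-u\rangle\le 0,\qquad \langle L(w),v\rangle:=\int_\Omega|\nabla w|^{p(x)-2}\nabla w\cdot\nabla v\,\diff x .
\]

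To conclude we use the $(S_+)$ property of the $p(\cdot)$-Laplacian $L\colon X\to X^*$, a standard fact from the variable exponent literature \cite{FZ}. Under weak convergence $u_n\rightharpoonup u$, the inequality $\limsup\langle L(u_n),u_n-u\rangle\le0$ forces $u_n\to u$ strongly in $X$. If one wants to avoid citing $(S_+)$, the argument is direct. Since $\langle L(u),u_n-u\rangle\to0$ by weak convergence, we get
\[
\int_\Omega\big(|\nabla u_n|^{p-2}\nabla u_n-|\nabla u|^{p-2}\nabla u\big)\cdot\nabla(u_n-u)\,\diff x\to0 .
\]
Split $\Omega$ into the sets $\{p(x)\ge2\}$ and $\{p(x)<2\}$. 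On the first set, the Simon-type inequality bounds $|\nabla u_n-\nabla u|^{p(x)}$ by the integrand. On the second set, we use the corresponding inequality together with Hölder's inequality and the boundedness of $\{u_n\}$. This gives $\rho(|\nabla u_n-\nabla u|)\to0$, hence $\|u_n-u\|\to0$ by \eqref{norm-modular}.

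We expect no serious obstacle. The only point requiring care is that the Kirchhoff factor must not degenerate, and the uniform lower bound $M_0\ge m_0>0$ guarantees exactly this.
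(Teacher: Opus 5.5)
Your proof is correct and follows the paper's argument. Like the paper, you test $J'(u_n)$ with $u_n-u$, show the reaction term vanishes, use $M_0\geq m_0>0$ to get $\int_\Omega|\nabla u_n|^{p(x)-2}\nabla u_n\cdot\nabla(u_n-u)\,\diff x\to 0$, and conclude with the $(S_+)$ property. The differences are cosmetic: you treat the reaction term via the $L^{s'(\cdot)}(\Omega)$ bound on $g(\cdot,u_n)$ and variable-exponent H\"older, which is cleaner than the paper's appeal to dominated convergence, and the extra subsequence making $t_n$ converge is unnecessary.
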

\begin{proof}
Since $\{u_n\}_{n=1}^\infty$ is bounded in the reflexive Banach space $X$, up to a subsequence we have
	\begin{equation*}\label{PL.wc}
		u_n(x) \to u(x)  \quad  \text{for a.a.} \ \
		x\in\Omega\ \ \text{and} \ \ 
		u_n \rightharpoonup u  \quad \text{in} \ \ X\ \ \text{as}\ \ n\to\infty.
	\end{equation*}
	Then, invoking \eqref{Compact.Imb} and the Lebesgue dominated convergence theorem we easily deduce that
	\begin{equation}\label{limgn}
		\int_\Omega g(x,u_n)(u_n-u)\,\diff x\to 0\ \ \text{as}\ \ n\to\infty.
	\end{equation}
	From \eqref{Est.M0} and \eqref{J'} we obtain
	\begin{align*}\label{PS1.S+.1}
		\notag	m_0\bigg|\int_\Omega |\nabla u_n|^{p(x)-2}\nabla u_n \cdot \nabla(u_n-u)\diff x\bigg|	\leq& M_0\left(\int_\Omega\frac{1}{p(x)}|\nabla u_n|^{p(x)}\,\diff x\right)\bigg|\int_\Omega |\nabla u_n|^{p(x)-2}\nabla u_n \cdot \nabla(u_n-u)\diff x\bigg|\\
		\notag	\leq& \big|\langle J' (u_n),u_n-u\rangle\big|+\left|\int_\Omega g(x,u_n)(u_n-u)\diff x\right|.
	\end{align*}
	Using \eqref{limgn}, the boundedness of $\{u_n\}_{n=1}^\infty$ in $X$ and $J'(u_n)\to 0$ in $X^*$ as $n\to\infty$,  the last estimate yields
	\begin{align*}
	\int_\Omega |\nabla u_n|^{p(x)-2}\nabla u_n \cdot \nabla(u_n-u)\,\diff x\to 0\ \ \text{as}\ \ n\to\infty;
	\end{align*}
	hence, $u_n\to u$ in $X$ as $n\to\infty$ due to the $(S_+)$ property of the $p(\cdot)$-Laplacian. The proof is complete.

\end{proof}
\begin{lemma} \label{le2}
	For any $\lambda >0$ and $\theta\in\R$, there exists $\phi \in X$ such that $J(t\phi)<0$ for all small $t>0.$ 
\end{lemma}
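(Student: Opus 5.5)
We take $\phi\in C_0^\infty(B)$, $\phi\ge 0$, $\phi\not\equiv 0$, supported in the ball $B$ from \eqref{H}(iii). On $B$ we have $s(x)<p(x)$ and $\omega>0$ a.e. We then compare the two parts of $J(t\phi)$ for small $t\in(0,1)$. Since $B$ is open, $\operatorname{supp}\phi$ is compact in $B$, and $p,s$ are continuous, we get
\[
\delta:=\min_{x\in\operatorname{supp}\phi}\bigl(p(x)-s(x)\bigr)>0 .
\]
Set $p_\phi^-:=\min_{\operatorname{supp}\phi}p$ and $s_\phi^+:=\max_{\operatorname{supp}\phi}s$. Shrinking the support if needed, we may assume $s_\phi^+<p_\phi^-$: choose $\phi$ supported in a tiny ball around a point of $B$, so that by continuity the oscillations of $p$ and $s$ there are less than $\delta/2$.

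\emph{Gradient term.} For $t\in(0,1)$ we have $\int_\Omega\frac1{p(x)}|\nabla(t\phi)|^{p(x)}\,\diff x\le t^{p_\phi^-}\int_\Omega\frac1{p(x)}|\nabla\phi|^{p(x)}\,\diff x$, since the integrand vanishes off $\operatorname{supp}\phi$. By \eqref{Est.M0},
\[
\widehat M_0\Bigl(\int_\Omega\tfrac1{p(x)}|\nabla (t\phi)|^{p(x)}\,\diff x\Bigr)\le M(t_0)\,t^{p_\phi^-}\int_\Omega\tfrac1{p(x)}|\nabla\phi|^{p(x)}\,\diff x=:C_1t^{p_\phi^-}.
\]

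\emph{Reaction term.} Take $t$ small enough that $t\|\phi\|_\infty\le 1<T$, so the middle branch of \eqref{G} applies. Then
\[
\int_\Omega G(x,t\phi)\,\diff x=\lambda\int_B\frac{\omega}{s(x)}t^{s(x)}\phi^{s(x)}\,\diff x+\theta\int_B\frac{b}{\gamma(x)}t^{\gamma(x)}\phi^{\gamma(x)}\,\diff x .
\]
Because $\omega>0$ a.e. on $B$ and $\phi\not\equiv0$, the first integral is at least $\lambda t^{s_\phi^+}C_2$ with $C_2:=\int_B\frac{\omega}{s(x)}\phi^{s(x)}\,\diff x>0$. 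The second is bounded in absolute value by $|\theta|\,\|b\|_\infty\,t^{\gamma_\phi^-}C_3$, where $\gamma_\phi^-:=\min_{\operatorname{supp}\phi}\gamma$ and $C_3$ is finite because $\phi$ is bounded. We need $\gamma_\phi^->s_\phi^+$. This follows from $s<\gamma$ on $\close$, again after shrinking the support so that the oscillations of $s$ and $\gamma$ are smaller than half of $\min_{\operatorname{supp}\phi}(\gamma-s)$.

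\emph{Conclusion.} Combining the estimates gives
\[
J(t\phi)\le C_1t^{p_\phi^-}-\lambda C_2t^{s_\phi^+}+|\theta|\|b\|_\infty C_3t^{\gamma_\phi^-}.
\]
Both $p_\phi^-$ and $\gamma_\phi^-$ exceed $s_\phi^+$, so dividing by $t^{s_\phi^+}$ shows the right-hand side is negative for all sufficiently small $t>0$. The only delicate point is the localization step: $p^-<s^-$ globally, so we cannot compare global exponents. The argument works only because $\phi$ is supported in a small ball inside $B$, where the local exponent ordering $s_\phi^+<\min\{p_\phi^-,\gamma_\phi^-\}$ holds.
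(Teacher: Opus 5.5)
Your proof is correct and is essentially the paper's argument. The paper also localizes: it picks $x_0\in B$, a margin $0<\eta_0<\frac{\min\{p(x_0),\gamma(x_0)\}-s(x_0)}{2}$, and a small ball $B_0\subset B$ on which $s<s(x_0)+\eta_0$, $p>p(x_0)-\eta_0$ and $\gamma>\gamma(x_0)-\eta_0$. It then takes $0\le\phi\le 1$ supported in $B_0$, uses $\widehat M_0(\tau)\le M(t_0)\tau$, and compares the three powers of $t$ exactly as you do with $p_\phi^-$, $s_\phi^+$ and $\gamma_\phi^-$.
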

\begin{proof}
	Let $\lambda >0$ and $\theta\in\R$, and let $x_0\in B$  such that $s(x_0)<\min\{p(x_0),\gamma(x_0)\}$. Since $s, p,\gamma \in C(\overline{\Omega})$, there exist a number $\eta_0$ and an open ball $B_0\ne\emptyset$ satisfying  $0<\eta_0<\frac{\min\{p(x_0),\gamma(x_0)\}-s(x_0)}{2}$, $\overline{B_0}\subset B$, and
	\begin{equation}\label{q}
		s(x)< s(x_0)+ \eta_0, \ p(x)> p(x_0)- \eta_0,\ \gamma(x)> \gamma(x_0)- \eta_0,\quad \forall  x\in B_0.
	\end{equation}
	Let $\phi \in C_0^{\infty}(\Omega)\setminus \{0\}$ such that $\operatorname{supp}(\phi)\subset B_0$ and $0\leq \phi \leq 1$ on $B_0$. Then for $t\in (0,1),$ it follows from \eqref{Est.M0}, \eqref{G} and \eqref{q} that
	$$J(t\phi)\leq a_0t^{p(x_0)-\eta_0}-b_0\lambda t^{s(x_0)+\eta_0} +c_0|\theta|t^{\gamma(x_0)-\eta_0},$$
	where $a_0:=\frac{M(t_0)\int_{B_0}|\nabla \phi|^{p(x)}\,\diff x}{p^-}>0,\ b_0:=\frac{\int_{B_0}\omega(x)\phi^{s(x)}\,\diff x}{s^+}>0$ and $c_0:=\frac{\int_{B_0}|b(x)|\,\diff x}{\gamma^-}\geq 0$. Thus, $J(t\phi)<0$ for sufficiently small $t>0$ since $s(x_0)+\eta_0<\min\{p(x_0)-\eta_0,\gamma(x_0)-\eta_0\}.$

\end{proof}

\begin{lemma}\label{le.geo}
	For each $\lambda\in (0,\bar{C}_s)$, there exists $\theta_\star>0$ such that, for any $\theta\in (-\theta_\star,\theta_\star)$, there exist $\delta,\rho>0$ satisfying
	$$\underset{u\in \partial B_{\delta}}{\inf} J(u)\geq \rho>0> \underset{u\in B_{\delta}}{\inf} J(u),$$
	where $B_{\delta}:=\{u\in X:\norm{u}\leq \delta\}$ and $\partial B_{\delta}:=\{u\in X:\norm{u}= \delta\}$.
\end{lemma}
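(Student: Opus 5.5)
The plan is to take $\delta:=1$ and establish the two inequalities separately. The lower bound on the sphere will come from the modular--norm relation \eqref{norm-modular}, the estimate \eqref{Est.M0} on $\widehat M_0$, the growth bound \eqref{G.bound} on $G$, and the embedding constant $C_s$ from \eqref{BC}. The strict negativity of the infimum over the ball will come directly from Lemma~\ref{le2}. Throughout, $T>1$ and $t_0$ are treated as already fixed, so $\theta_\star$ is allowed to depend on them.

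\emph{Lower bound on $\partial B_1$.} Let $u\in X$ with $\|u\|=1$. By \eqref{norm-modular} applied to $|\nabla u|$ we get $\int_\Omega|\nabla u|^{p(x)}\,\diff x=1$. Hence \eqref{Est.M0} gives
$$\widehat M_0\left(\int_\Omega\tfrac{1}{p(x)}|\nabla u|^{p(x)}\diff x\right)\ge \frac{m_0}{p^+}.$$
By \eqref{BC} we have $\|u\|_{s(\cdot)}\le C_s$. Applying \eqref{norm-modular} with exponent $s$ then yields
$$\int_\Omega|u|^{s(x)}\,\diff x\le \max\{C_s^{s^+},C_s^{s^-}\}=\left(\min\{C_s^{-s^+},C_s^{-s^-}\}\right)^{-1}.$$
Combining these with \eqref{G.bound} (which bounds $|G|$, so the sign of $\theta$ does not matter), we obtain for $\lambda>0$:
$$J(u)\ \ge\ \frac{m_0}{p^+}-\frac{w_0\big(\lambda+2|\theta|T^{(\gamma-s)^+}\big)}{s^-}\max\{C_s^{s^+},C_s^{s^-}\}.$$
By the definition \eqref{Cs}, the condition $\lambda<\bar C_s$ is exactly
$$\frac{w_0\lambda}{s^-}\max\{C_s^{s^+},C_s^{s^-}\}<\frac{m_0}{p^+}.$$
So the quantity
$$\varepsilon_\lambda:=\frac{m_0}{p^+}-\frac{w_0\lambda}{s^-}\max\{C_s^{s^+},C_s^{s^-}\}$$
is positive. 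Choose
$$\theta_\star:=\frac{\varepsilon_\lambda\, s^-}{4w_0T^{(\gamma-s)^+}\max\{C_s^{s^+},C_s^{s^-}\}}.$$
Then for $|\theta|<\theta_\star$ the $\theta$-contribution is below $\varepsilon_\lambda/2$, and so $J(u)\ge \rho:=\varepsilon_\lambda/2>0$ on $\partial B_1$.

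\emph{Negative infimum on $B_1$.} Lemma~\ref{le2} provides $\phi\in X$ with $J(t\phi)<0$ for all small $t>0$. Taking $t<1/\|\phi\|$ places $t\phi$ in $B_1$, so $\inf_{B_1}J<0$.

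The only delicate point is the bookkeeping. One must check that the explicit constant $\bar C_s$ matches the worst case of \eqref{norm-modular}, including both regimes $C_s\le 1$ and $C_s>1$, which is why the $\max/\min$ appears. One must also make sure that $\theta_\star$ may legitimately depend on the truncation level $T$, which is fixed before this lemma. No compactness is needed here; Lemma~\ref{le.PS} will only enter afterwards, when Ekeland's principle is applied on $B_\delta$.
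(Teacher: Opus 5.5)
Your proof is correct and follows essentially the same argument as the paper. The lower bound on the sphere comes from \eqref{Est.M0}, \eqref{G.bound}, \eqref{norm-modular} and \eqref{BC}, and the negative infimum comes from Lemma~\ref{le2}. The only difference is that you take $\delta=1$ instead of the paper's $\delta=\bigl(\frac{1+\lambda\bar C_s^{-1}}{2}\bigr)^{1/(p^+-s^-)}<1$. This change is harmless: $\delta=1$ is still independent of $T$ and $\theta$, the later bound $T\le C_0\max\{C_h^{\tau_1},C_h^{\tau_2}\}+1$ remains valid, and your $\theta_\star$ simplifies to exactly the paper's $(\bar C_s-\lambda)T^{-(\gamma-s)^+}/4$.
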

\begin{proof} 	
	Let $\lambda\in (0,\bar{C}_s)$ and put 
	\begin{equation}\label{del}
		\delta:=\left(\frac{1+\lambda \bar{C}_s^{-1}}{2}\right)^{\frac{1}{p^+-s^-}},
	\end{equation}
where $\bar{C}_s$ is given in \eqref{Cs}. It is clear that $\delta\in (0,1)$. Thus, for $u\in \partial B_{\delta}$ by \eqref{Est.M0}, \eqref{G.bound} and \eqref{norm-modular} we obtain
	\begin{align*}
		J(u)&\geq \frac{m_0}{p^+}\|u\|^{p^+}- \frac{w_0\left(\lambda+2|\theta| T^{(\gamma-s)^+}\right)}{s^-}\max\left\{\|u\|_{s(\cdot)}^{s^+},|u\|_{s(\cdot)}^{s^-}\right\}.
	\end{align*}
	Then, by \eqref{BC}, \eqref{Cs}, \eqref{del} and noticing $\|u\|=\delta<1$ it yields
	\begin{align*}
		J(u)\geq& \frac{m_0}{p^+}\|u\|^{p^+}- \frac{w_0\left(\lambda+2|\theta| T^{(\gamma-s)^+}\right)}{s^-}\max \left\{C_s^{s^+},C_s^{s^-}\right\}\|u\|^{s^-}\\
		&=\frac{m_0\delta^{s^-}}{p^+}\left(\frac{1-\lambda \bar{C}_s^{-1}}{2}-2\bar{C}_s^{-1}T^{(\gamma-s)^+}|\theta| \right).
	\end{align*}
	Thus, by choosing 
	\begin{equation}\label{theta}
		\theta_\star=\frac{\left(\bar{C}_s-\lambda\right) T^{-(\gamma-s)^+}}{4},
	\end{equation}
	for $\theta\in (-\theta_\star,\theta_\star)$ one has
	
	\begin{equation*}\label{PB}
		J(u)\geq\rho,\ \ \forall u\in\partial B_{\delta},
	\end{equation*}
	where $\rho:=\frac{2m_0\delta^{s^-}\bar{C}_s^{-1}T^{(\gamma-s)^+}}{p^+}\left(\theta_\star-|\theta| \right)>0$. From this and Lemma~\ref{le2} the conclusion follows. 
\end{proof}

Now, fix any $h\in C(\overline{\Omega})$ with $\max\left\{p(\cdot),s(\cdot)\right\}<h(\cdot)<p^*(\cdot)$, and consider the following condition:

\vskip5pt
\begin{enumerate}[label=\textnormal{(G1)},ref=G1]
	\item \label{G1} $|g(x,t)|  \leq  w_0\bar{C}_s\left(|t|^{h(x)-1}+1\right)$ for  a.a. $x \in \Omega$ and all $t \in \R$.
\end{enumerate}

\vskip5pt
Then, we have the following lemma.
\begin{lemma}\label{le.b}
	Let \eqref{G1} hold, and let $u$ be any weak solution to problem \eqref{Eq-m}. Then, $u \in L^\infty(\Omega)$ and satisfies the following a priori bound
	\begin{equation*}
		\|u\|_{\infty} \leq C_0  \max \left\{ \|u\|^{\tau_1}_{h(\cdot)}, \|u\|^{\tau_2}_{h(\cdot)}\right\},
	\end{equation*} 
	where $C_0, \tau_1,\tau_2$ are positive constants independent of $u$ and $\theta$. 
\end{lemma}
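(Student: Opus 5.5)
We use a De Giorgi-type iteration on the level sets of $u$, using the equation tested with truncations. Since any weak solution $u$ of \eqref{Eq-m} is nonnegative (as remarked before Lemma~\ref{le.PS}), it suffices to bound $u$ from above. The test functions are $v=(u-k)_+$ for $k\geq k_0\geq1$. By \eqref{Est.M0} the Kirchhoff factor is bounded below by $m_0$, independently of $u$. Hence
\begin{equation*}
m_0\int_{A_k}|\nabla u|^{p(x)}\,\diff x\leq \int_{A_k} g(x,u)(u-k)\,\diff x\leq w_0\bar{C}_s\int_{A_k}\left(u^{h(x)-1}+1\right)(u-k)\,\diff x,
\end{equation*}
where $A_k:=\{x\in\Omega: u(x)>k\}$ and we used \eqref{G1}. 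The only constants that enter are $m_0$, $w_0\bar C_s$, $p^\pm$, $h^\pm$, $N$ and $|\Omega|$. None of them depends on $\theta$ or on $u$, which explains why $C_0$, $\tau_1$, $\tau_2$ are independent of $\theta$.

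The main step is to turn this into a recursive inequality for
\begin{equation*}
Y_k:=\int_{A_k}(u-k)^{h(x)}\,\diff x,
\end{equation*}
or for a similar quantity in $L^{h(\cdot)}$, along the levels $k_n=K(1-2^{-n-1})$. On $A_{k_{n+1}}$ we have $u\le \frac{2^{n+2}}{1}\,(u-k_n)$ roughly, which controls $u^{h-1}(u-k_{n+1})$ by $2^{nh^+}(u-k_n)^{h(x)}$. In the same way $|A_{k_{n+1}}|\leq (K2^{-n-2})^{-h^-}Y_{k_n}$, up to a factor involving $\max\{1,\cdot\}$ because the exponents vary. On the left-hand side we use the Sobolev embedding $X\hookrightarrow L^{p^*(\cdot)}$, or more precisely into $L^{\tilde h(\cdot)}$ with $h<\tilde h<p^*$, applied to $(u-k_{n+1})_+$. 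The gradient modular is converted into a norm through \eqref{norm-modular}, which produces the alternatives $\|\cdot\|^{p^-}$ and $\|\cdot\|^{p^+}$. Hölder's inequality on $A_{k_{n+1}}$ then gives a factor $|A_{k_{n+1}}|^{\varepsilon}$ with $\varepsilon>0$ coming from $h<\tilde h$. Combining these, we aim for
\begin{equation*}
Y_{n+1}\leq C\,b^n\,K^{-\alpha}\max\{Y_n^{1+\beta_1},Y_n^{1+\beta_2}\}
\end{equation*}
with $\beta_i>0$. This is the step I expect to be the main obstacle: the exponents $p(\cdot)$ and $h(\cdot)$ are variable and $h$ may exceed $p^+$, so the super-linear gain $\beta_i>0$ must come from $\tilde h>h$ and not from $p^*>p$ pointwise. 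This forces us to track carefully whether the norms involved are $\le1$ or $\ge1$ at each stage. If the direct approach is too messy, I would first localize the exponents, working on small balls with $h^+_{B}<(p^-_B)^*$. Alternatively, I would run the Moser-type argument of Fan–Zhao/Ho–Kim, which the paper probably cites.

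Finally, we apply the standard fast-geometric-convergence lemma: if $Y_0\le C^{-1/\beta}b^{-1/\beta^2}$, adjusted for the two exponents, then $Y_n\to0$, and so $u\le K$ a.e. Here $Y_0\le\rho_{h}(u)\le\max\{\|u\|_{h(\cdot)}^{h^-},\|u\|_{h(\cdot)}^{h^+}\}$. Choosing $K$ proportional to the appropriate power of $\rho_h(u)$, namely $K\sim \max\{\|u\|_{h(\cdot)}^{\tau_1},\|u\|_{h(\cdot)}^{\tau_2}\}$, yields the stated bound $\|u\|_\infty\le C_0\max\{\|u\|^{\tau_1}_{h(\cdot)},\|u\|^{\tau_2}_{h(\cdot)}\}$. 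The levels must stay $\ge1$ so that the "$+1$" term in \eqref{G1} is absorbed into $u^{h-1}$. We handle this by taking $K\ge2$ in the argument and then noting that the power form still holds, possibly with $C_0$ enlarged. If $\|u\|_{h(\cdot)}$ is very small, $K\ge2$ conflicts with the power form; in that case we scale the levels differently, which is where the two exponents $\tau_1,\tau_2$ come from.
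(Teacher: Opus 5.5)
Your overall route is the one the paper relies on. The paper defers to the De Giorgi argument of \cite{HW2022} (Theorem 4.2 there, with $\mu=0$ and $p^*$ replaced by some $m$ with $h<m<p^*$, i.e.\ your $\tilde h$). Your energy inequality, the use of $M_0\ge m_0$, and the independence of the constants from $\theta$, $T$, $t_0$ are fine.

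\textbf{The gap.} It lies in the step you flag as the main obstacle: in the global form you describe, it does not merely get messy, it fails. Take $v_n=(u-k_{n+1})_+$ and assume $Y_n$ is small.
\begin{itemize}
\item The energy bound $\int_\Omega|\nabla v_n|^{p(x)}\,\diff x\le C_nY_n$ gives, via \eqref{norm-modular}, only $\|v_n\|\le (C_nY_n)^{1/p^+}$.
\item H\"older's inequality then gives $Y_{n+1}\lesssim\|v_n\|_{\tilde h(\cdot)}^{h^-}|A_{k_{n+1}}|^{\varepsilon}$ with $\varepsilon=\min_{\close}(1-h/\tilde h)$.
\item Together with $|A_{k_{n+1}}|\lesssim Y_n$, this yields $Y_{n+1}\le C_n'Y_n^{h^-/p^++\varepsilon}$.
\end{itemize}
At a point $x_1$ with $p(x_1)=p^-$ you have $h^-\le h(x_1)<(p^-)^*$ and $\varepsilon\le 1-h(x_1)/\tilde h(x_1)<1-p^-/(p^-)^*=p^-/N$. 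So the exponent is below $(p^-)^*/p^++p^-/N$. For $N=10$, $p^-=1.1$, $p^+=9$, which \eqref{H} allows, this is about $0.25$, whatever admissible $h,\tilde h$ you pick. In this chain the H\"older gain from $\tilde h>h$ cannot pay for the loss $h^-/p^+$ incurred by passing between global modulars and norms. The real danger is not that $h$ may exceed $p^+$, but that $h^-$ may lie far below $p^+$.

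\textbf{The missing idea is localization.} It is indispensable, not a fallback.
\begin{itemize}
\item Since $h>p$ on the compact set $\close$ and both are continuous, cover $\close$ by finitely many small balls $B$ on which $h^-_B>p^+_B$, with extrema taken over $\overline{B\cap\Omega}$.
\item Test the equation with $\xi^{p^+_B}(u-k)_+$, where $\xi$ is a cut-off supported in $B$. This is admissible because $(u-k)_+\in X$ for $k\ge 0$.
\item Divide by $M_0(\cdot)\ge m_0$ and absorb the cross term $\xi^{p^+_B-1}(u-k)_+|\nabla u|^{p(x)-1}|\nabla\xi|$ by Young's inequality.
\item Apply the embedding into $L^{\tilde h(\cdot)}$ to the localized truncations. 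Because they are supported in $B$, the modular--norm inequalities hold with the local exponents $p^\pm_B,h^\pm_B$. The recursion exponent becomes $h^-_B/p^+_B+\varepsilon>1$.
\item Iterate on shrinking balls and increasing levels, then take the maximum over the cover.
\end{itemize}
So the superlinearity comes from the pointwise inequality $h>p$, made uniform on small balls. Your argument never uses this inequality.

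\textbf{The ``$+1$'' term.} The ``$+1$'' in \eqref{G1} does not require $K\ge2$. On $A_{k_{n+1}}$ one has $1\le\bigl((u-k_n)/(k_{n+1}-k_n)\bigr)^{h(x)-1}$. This only puts a factor $\max\{1,K^{1-h^+}\}$ into the recursion. The smallness condition $Y_0\le\rho_h(u)$ then yields $K$ as a maximum of two powers of $\|u\|_{h(\cdot)}$, which is where $\tau_1,\tau_2$ come from.
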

This lemma can be proved in the same manner as in \cite[Proof of Theorem 4.2]{HW2022}. Indeed, in \cite[Proof of Theorem 4.2]{HW2022}, one puts $\mu=0$ and replaces $p^*$ by a function $m\in C(\close)$ such that $h(\cdot)<m(\cdot)<p^*(\cdot)$, therefore, the condition $p(\cdot)<N$ is not needed since we apply \eqref{Compact.Imb} instead of \cite[Proposition 3.7]{HW2022}. For this reason, we omit the details.

\vskip5pt
We are now in a position to prove Theorem~\ref{Theo.Sandwich}.

\begin{proof}[\textbf{Proof of Theorem~\ref{Theo.Sandwich}}] By \eqref{Compact.Imb}, we find $C_h>0$ such that
	\begin{equation}\label{Ch}
		\|u\|_{h(\cdot)}\leq C_h\|u\|,\quad \forall u\in X.
	\end{equation}
	Now, let $\lambda\in (0,\bar{C}_s)$, and let $\delta$ be defined by \eqref{del}. Put
	$$T=C_0\max \left\{ (C_h \delta)^{\tau_1}, (C_h\delta)^{\tau_2}\right\}+1,$$ where $C_0, \tau_1,\tau_2$ are given in Lemma~\ref{le.b}, and let $\theta_\star$ be as in \eqref{theta}.

	Let $\theta\in (-\theta_\star,\theta_\star)$ and denote  $$c:=\underset{u\in B_{\delta}}\inf J(u),$$ where $B_{\delta} :=\{u\in X: \|u\|\leq\delta\}$ with the boundary $\partial B_{\delta}$. By Lemma~\ref{le.geo}, it holds that
	$c<0<\underset{u\in\partial B_{\delta}}\inf J(u).$ Applying the Ekeland variational principle, we find a sequence $\{u_n\}_{n=1}^\infty\subset B_{\delta}$ such that
	\begin{equation}\label{PS1}
		\begin{cases}
			J(u_n)\to c \ \ \text{as} \ \ n\to \infty, \\
			J'(u_n)\to 0 \ \ \text{in} \  X^\ast \ \text{as}\ n\to \infty.
		\end{cases}
	\end{equation}
Thus, by Lemma~\ref{le.PS}, $\{u_n\}_{n=1}^\infty$ admits a subsequence $\{u_{n_k}\}_{k=1}^\infty$ such that $u_{n_k}\to \underline{u}$ in $X$ as $k\to \infty.$  It follows from  this and \eqref{PS1} that $J(\underline{u})= c$ and $J'(\underline{u})= 0$. Hence, $\underline{u}$ is a nontrivial nonnegative weak solution to problem \eqref{Eq-m} with  $J(\underline{u})=c<0$ and $\|\underline{u}\|\leq \delta$. On the other hand,  \eqref{G1} is fulfilled thanks to condition \eqref{g} and estimate $\lambda + |\theta| T^{(\gamma-s)^+}<\bar{C}_s$. Thus, by Lemma \ref{le.b} and \eqref{Ch}, we obtain
	\begin{equation}\label{E-T}
		\|\underline{u}\|_{\infty} \leq C_0  \max \left\{ \|\underline{u}\|^{\tau_1}_{h(\cdot)}, \|\underline{u}\|^{\tau_2}_{h(\cdot)}\right\}\leq C_0\max \left\{ (C_h \delta)^{\tau_1}, (C_h \delta)^{\tau_2}\right\}<T.
	\end{equation} 
	On the other hand, using $\left\langle J' (\underline{u}),\underline{u}\right\rangle=0$ we estimate
	\begin{align*}
		m_0p^-\int_\Omega \frac{1}{p(x)}|\nabla \underline{u}|^{p(x)}\,\diff x\leq  M_0\left(\int_\Omega \frac{1}{p(x)}|\nabla \underline{u}|^{p(x)}\,\diff x\right)\int_\Omega |\nabla \underline{u}|^{p(x)}\,\diff x=\int_\Omega g(x,\underline{u})\underline{u}\,\diff x.
	\end{align*}
	Combining this with \eqref{g} gives 
	\begin{align}\label{E-t01}
		\int_\Omega \frac{1}{p(x)}|\nabla \underline{u}|^{p(x)}\,\diff x\leq\frac{w_0\left(\lambda + |\theta| T^{(\gamma-s)^+}\right)}{m_0p^-}\int_\Omega |\underline{u}|^{s(x)}\,\diff x\leq\frac{|\Omega|w_0 \left(\lambda + |\theta| T^{(\gamma-s)^+}\right)T^{s^+}}{m_0p^-},
	\end{align}
	where $|\Omega|$ denotes the Lebesgue measure of $\Omega$. Noticing $0<T\leq C_0\max\left\{C_h^{\tau_1},C_h^{\tau_2}\right\}+1$ and $0<\lambda + |\theta| T^{(\gamma-s)^+}<\overline{C}_s$ we have 
	\begin{equation}\label{E-t02}
		\frac{|\Omega|w_0 \left(\lambda + |\theta| T^{(\gamma-s)^+}\right)T^{s^+}}{m_0p^-}\leq \frac{|\Omega|w_0 \overline{C}_s\left(C_0\max\left\{C_h^{\tau_1},C_h^{\tau_2}\right\}+1\right)^{s^+}}{m_0p^-}.
	\end{equation}
	We now fix $t_0>\frac{|\Omega|w_0 \overline{C}_s\left(C_0\max\left\{C_h^{\tau_1},C_h^{\tau_2}\right\}+1\right)^{s^+}}{m_0p^-}$ such that $M(t_0)=\max_{t\in [0,t_0]}M(t)$.
		Thus, from \eqref{E-T} -\eqref{E-t02}, $\underline{u}$ is a nontrivial nonnegative bounded  weak solution to problem \eqref{Eq} with negative energy. 
		
		\vskip5pt
		If we assume additionally that $p\in C^{1}({\close})$ and $\omega$ and $b$ are nonnegative a.e. in $\Omega$, then $\underline{u}\in C^1(\Omega)$ in view of \cite{Fan2007}. Utilizing this fact, we apply the strong maximum principle for the $p(\cdot)$-Laplacian (see \cite{Fan-SMP}) to deduce the positivity of $\underline{u}$ in $\Omega$. The proof is complete.

\end{proof}

\subsection*{Acknowledgment}
This research was supported by the University of Economics Ho Chi Minh City (UEH), Vietnam.



\begin{thebibliography}{99}
	
	\bibitem{BBF.2021}L. Baldelli, Y. Brizi, R. Filippucci, Multiplicity results for $(p, q)$-Laplacian equations with critical exponent in $\R^N$ and negative energy, 
	Calc. Var. Partial Differ. Equ. 60 (2021) 8, 30pp.
	
	\bibitem{BFG} M. Bhakta, A.Fiscella, S. Gupta, Critical $(p,q)$-fractional problems involving a sandwich type nonlinearity, Bull. Sci. Math. 208 (2026) 103786.
	
		
	\bibitem{CF} F. Colasuonno, K. Perera, Critical growth double phase problems: The local case and a Kirchhoff type case, J. Differ. Equ. 422 (2025) 426--488.
	
		
\bibitem{Diening} L. Diening,  P. Harjulehto, P. H\"{a}st\"{o}, M. R\^{u}\v{z}i\v{c}ka, 	Lebesgue and Sobolev spaces with  variable exponents, 	Lecture Notes in Mathematics  2017, Springer-Verlag, Heidelberg, 2011.

	\bibitem{Fan2007} X. Fan, Global $C^{1,\alpha}$ regularity for variable exponent elliptic equations in divergence form, J. Differ. Equ. 235 (2007) 397--417.
	
	\bibitem{FZ} X. Fan, D. Zhao,
	On the spaces $L^{p(x)}(\Omega)$ and $W^{m,p(x)}(\Omega)$, J. Math. 	Anal. Appl. 263 (2001) 424--446.
	
	\bibitem{Fan-SMP} X. Fan, Y. Zhao, Q. Zhang,
	A strong maximum principle for $p(x)$-Laplace equations,
	Chinese J. Contemp. Math. 24 (3) (2003) 277--282.
	
	\bibitem{FFHW} C. Farkas, A. Fiscella, K. Ho, P. Winkert, Critical double phase problems involving sandwich-type nonlinearities, Discrete Contin. Dyn. Syst. 48 (2026) 352--375.
	
	
	\bibitem{HS.AML21} K. Ho, I. Sim, An existence result for $(p, q)$-Laplace equations involving sandwich-type and critical growth, Appl. Math. Lett. 111 (2021) 106646.
	
	\bibitem{HS.ANA23} K. Ho, I. Sim, On sufficient ``local'' conditions for existence results to generalized $p(\cdot)$-Laplace equations involving
	critical growth, Adv. Nonlinear Anal. 12 (1) (2023) 182--209.
	
	\bibitem{HW2022} K. Ho,  P. Winkert, New embedding results for double phase problems with variable exponents and a priori bounds for corresponding generalized double phase problems, Calc. Var. Partial Differ. Equ. 62 (2023) 227, 38 pp.
		
	\bibitem{MR07} M. Mih\u{a}ilescu, V. R\u{a}dulescu, On a nonhomogeneous quasilinear eigenvalue problem in Sobolev spaces with variable exponent, Proc. Amer. Math. Soc. 135 (9) (2007) 2929--2937.
	

\end{thebibliography}
\end{document}